\documentclass[reqno,11pt]{amsart}
\usepackage{amssymb,amsmath,enumerate,tikz,hyperref}
\usepackage{bbm,dsfont,soul}

\evensidemargin20pt
\oddsidemargin20pt
\textwidth6in

\numberwithin{equation}{section}
\newtheorem{theorem}[equation]{Theorem}
\newtheorem{lemma}[equation]{Lemma}
\newtheorem{proposition}[equation]{Proposition}
\newtheorem{corollary}[equation]{Corollary}

\theoremstyle{definition}
\newtheorem{definition}[equation]{Definition}
\newtheorem{example}[equation]{Example}
\newtheorem*{remark}{Remark}
\DeclareMathOperator{\rl}{rl}

\newcommand{\oeis}[1]{ \href{https://oeis.org/#1}{#1} }

\title[Rational Dyck paths and factor-free Dyck words]{On rational Dyck paths and the enumeration \\
of factor-free Dyck words}
\author{Daniel Birmajer}
\address{Department of Mathematics\\ Nazareth College\\ 4245 East Ave.\\ Rochester, NY 14618}
\author{Juan B. Gil}
\address{Penn State Altoona\\ 3000 Ivyside Park\\ Altoona, PA 16601}
\author{Michael D. Weiner}

\begin{document}
\maketitle

\begin{abstract}
Motivated by independent results of Bizley and Duchon, we study rational Dyck paths and their subset of factor-free elements. On the one hand, we give a bijection between rational Dyck paths and regular Dyck paths with ascents colored by factor-free words. This bijection leads to a new statistic based on the reducibility level of the paths for which we provide a corresponding formula. On the other hand, we prove an inverse relation for certain sequences defined via partial Bell polynomials, and we use it to derive a formula for the enumeration of factor-free words. In addition, we give alternative formulas for various enumerative sequences that appear in the context of rational Dyck paths.
\end{abstract}

\section{Introduction}
\label{sec:intro}

In his paper \cite{Bizley} of 1954, Bizley proved a formula (accredited to Howard Grossman) for the number $\phi_n$ of lattice paths from $(0,0)$ to $(\alpha n,\beta n)$ which may touch but never rise above the line $\alpha y=\beta x$, where $n$, $\alpha$, and $\beta$ are positive integers with $\gcd(\alpha,\beta)=1$. Such a path is called $\frac{\beta}{\alpha}$-Dyck path of length $(\alpha+\beta)n$. Using the notation $f_j=\frac1{(\alpha+\beta)j}\binom{(\alpha+\beta)j}{\alpha j}$ for $j\in\mathbb{N}$, Bizley's formula reads
\begin{equation} \label{eq:rationalPaths}
  \phi_n = \sum \frac{f_1^{k_1} f_2^{k_2}\cdots}{k_1!k_2!\cdots},
\end{equation}
where the sum runs over all $k_j\in\mathbb{N}_0$ such that $k_1+2k_2+\cdots+nk_n=n$. 

On a more recent paper, Duchon \cite{Duchon} studied generalized Dyck languages and discussed the particular case of a two-letter language with alphabet $\{a,b\}$, having valuations $h(a)=\beta$ and $h(b)=-\alpha$ with $\gcd(\alpha,\beta)=1$. In this case, associating the letter $a$ with the step $(1,0)$ and the letter $b$ with the step $(0,1)$, the set of all Dyck words (words with total valuation equal to 0) is in one-to-one correspondence with the set of $\frac{\beta}{\alpha}$-Dyck paths. 

In \cite[Theorem~9]{Duchon}, Duchon established a key connection between the set $\mathcal{D}_{\beta/\alpha}$ of generalized Dyck words with slope $\beta/\alpha$ and its subset of corresponding factor-free words,\footnote{A word in a language $L$ is said to be factor-free if it has no proper factor in $L$.} denoted by $\mathcal{\tilde D}_{\beta/\alpha}$. More precisely, he proved the identity 
\begin{equation}\label{eq:Thm9}
  \phi_n = [t^n] D_{\beta/\alpha}(t) 
  = \dfrac1{1+(\alpha+\beta)n}\, [t^n] \tilde{D}_{\beta/\alpha}^{1+(\alpha + \beta)n}(t),
\end{equation}
where $D_{\beta/\alpha}(t)$ and $\tilde{D}_{\beta/\alpha}(t)$ are the generating functions enumerating (by word length) the elements of $\mathcal{D}_{\beta/\alpha}$ and $\mathcal{\tilde D}_{\beta/\alpha}$, respectively. The identities \eqref{eq:rationalPaths} and \eqref{eq:Thm9} are the foundation for the results obtained in this paper. 

On the one hand, as a consequence of a result given in \cite[Theorem 3.5]{BGMW}, the right-hand side of \eqref{eq:Thm9} also counts the elements of $\mathfrak{D}^{\Theta}_n(\alpha+\beta,0)$, the set of Dyck words of semilength $(\alpha+\beta)n$ created from strings of the form $``d\,"$ and $``u^{(\alpha+\beta)j}d"$ for $j=1,\ldots,n$, such that each maximal ascent $u^{(\alpha+\beta)j}$ may be colored in as many different ways as the number $\theta_j$ of factor-free words of length $(\alpha+\beta)j$. In other words, 
\begin{quote}
there is a bijection between the set of $\frac{\beta}{\alpha}$-Dyck paths of length $(\alpha+\beta)n$ and the set of colored Dyck paths in $\mathfrak{D}^{\Theta}_n(\alpha+\beta,0)$.
\end{quote}
In Section~\ref{sec:bijection} we give an explicit bijection that reveals, in geometric terms, the factor-free factorization of a rational Dyck path. Specifically, we use the number of peaks of a path in $\mathfrak{D}^{\Theta}_n(\alpha+\beta,0)$ to define the {\em reducibility level} of the associated $\frac{\beta}{\alpha}$-Dyck path. As a application, we give a formula (Theorem~\ref{thm:Narayana-like}) to compute the number of $\frac{\beta}{\alpha}$-Dyck paths with a given reducibility level.

On the other hand, by rewriting \eqref{eq:rationalPaths} and \eqref{eq:Thm9} in terms of partial Bell polynomials, and using an interesting inverse relation that we prove in Proposition~\ref{prop:inverse}, 
\begin{quote}
we obtain a formula for the number $\theta_n$ of factor-free $\frac{\beta}{\alpha}$-Dyck words of length $(\alpha+\beta)n$ in terms of $f_1, f_2,\dots$ in the spirit of \eqref{eq:rationalPaths}.
\end{quote}
Our formula, given in Theorem~\ref{thm:factor-freeCount}, appears to be new for cases other than $\alpha=2$ and $\beta=3$. For the case of slope $3/2$, Duchon already observed (cf.\ \cite[Prop.~10]{Duchon}) that there are $\theta_n = C_n+C_{n-1}$ factor-free Dyck words of length $5n$, where $C_n$ is the $n$th Catalan number.

\medskip
We finish the paper with various formulas connecting the sequences $(f_n)$, $(\phi_n)$, and $(\theta_n)$ with the related sequence $(\psi_n)$ that counts the number of $\frac{\beta}{\alpha}$-Dyck paths of length $(\alpha+\beta)n$ that never touch the line $\alpha y=\beta x$ (except at the initial and terminal points).

\section{Rational Dyck paths as colored regular Dyck paths}
\label{sec:bijection}

In this paper, we will follow the terminology used in \cite{Duchon} for the study of generalized Dyck words. We consider the alphabet $U=\{a,b\}$ and assume the valuations $h(a)=\beta$ and $h(b)=-\alpha$ for positive integers $\alpha$ and $\beta$ with $\gcd(\alpha,\beta)=1$. A Dyck word $w$ with slope $\beta/\alpha$ is a string of letters from $U$ such that $h(w)=0$, and for each left factor $u$ of $w$, $h(u)\ge 0$. Let $\mathcal{D}_{\beta/\alpha}$ denote the set of all such words. Note that the length $|w|$ of a word in $\mathcal{D}_{\beta/\alpha}$ is always a multiple of $\alpha+\beta$.

A word $w'\in \mathcal{D}_{\beta/\alpha}$ is called a factor of $w$ if there are words $u$ and $v$ (possibly empty words) such that $w= uw'v$ and $uv\in \mathcal{D}_{\beta/\alpha}$. If $u$ and $v$ are both not empty words, then $w'$ is called a proper factor of $w$. A word $w\in \mathcal{D}_{\beta/\alpha}$ is said to be {\em factor-free} if it has no proper factors in $\mathcal{D}_{\beta/\alpha}$. Let $\varepsilon$ denote the empy word.

Let $\phi_n$ be the number of elements in $\mathcal{D}_{\beta/\alpha}$ of length $(\alpha+\beta)n$, and let $\theta_n$ be the number of factor-free words in $\mathcal{D}_{\beta/\alpha}$ of length $(\alpha+\beta)n$. 
Applying Fa{\`a} di Bruno's formula to the right-hand side of \eqref{eq:Thm9}, we get the equivalent representation
\begin{equation}\label{eq:yw}
  \phi_n = \sum_{k=1}^{n} \binom{(\alpha + \beta)n}{k-1} \frac{(k-1)!}{n!} B_{n, k}(1! \theta_1,2! \theta_2,\dots),
\end{equation}
where $B_{n,k}(x_1,\dots,x_{n-k+1})$ denotes the $(n,k)$-th partial Bell polynomial.

As shown in \cite[Theorem~3.5]{BGMW}, the right-hand side of \eqref{eq:yw} gives the number of regular Dyck paths of semilength $(\alpha+\beta)n$ whose ascents have length a multiple of $\alpha+\beta$ and such that an ascent of length $(\alpha+\beta)j$ may be colored in $\theta_j$ different ways. We denote this set of Dyck paths/words by $\mathfrak{D}^{\Theta}_n(\alpha+\beta,0)$. The purpose of this section is to provide an explicit bijection between elements in $\mathcal{D}_{\beta/\alpha}$ of length $(\alpha+\beta)n$ and elements of $\mathfrak{D}^{\Theta}_n(\alpha+\beta,0)$. To this end, we first introduce some notation and discuss one example.

Every word $w\not=\varepsilon$ in $\mathcal{D}_{\beta/\alpha}$ can be written, uniquely, as $w = uw'v$, where $w'$ is the left-most, nonempty factor-free subword of $w$. In this case, we say that the word $uv$ is a {\em reduction} of $w$ by $w'$ and write $w \underset{w'}{\rightarrow} uv$.

Conversely, if $w$ and $w'$ are Dyck words and $1\le j\le |w|$, then we can write $w=uv$ with $|u|=j$ and insert $w'$ between $u$ and $v$ to form the new word $\hat w=uw'v$. We call $\hat w$ the {\em extension} of $w$ by $w'$ at the position $j$ and write $\hat w = (w\underset{j}{\leftarrow}w')$.

\begin{example} \label{ex:bijection}
Consider the word $w=aabbabbaababaabbbbbb$ in $\mathcal{D}_{3/2}$, which represents the following $\frac32$-Dyck path of length 20:
\begin{center}
\begin{tikzpicture}[scale=0.3]
\draw [step=1,thin,gray!40] (0,0) grid (8,12);
\draw [gray!60, thick] (0,0) -- (8,12);
\draw [very thick] (0,0) -- (2,0) -- (2,2) -- (3,2) -- (3,4) -- (5,4) -- (5,5) -- (6,5) -- (6,6) -- (8,6) -- (8,12);
\end{tikzpicture}
\end{center}
Factor $w$ as $w=u_1w'_1v_1=aabbabbaabab{\color{blue}(aabbb)}bbb$ noting that $w_1'={\color{blue}aabbb}$ is the left-most, nonempty factor-free subword of $w$. Thus the reduction $w \underset{w'_1}{\rightarrow} u_1v_1$ gives
\[ aabbabbaabab{\color{blue}(aabbb)}bbb \underset{\color{blue}aabbb}{\longrightarrow} aabbabbaababbbb. \]
Similarly, $w_2'={\color{blue}ababb}$ is factor-free, so we can reduce $aabbabba{\color{blue}(ababb)}bb \underset{\color{blue}ababb}{\longrightarrow} aabbabbabb$. Observe that $w_3'={\color{blue}aabbabbabb}$ is factor-free. If we let $\ell_j$ be the length of the factor $u_j$ in the $j$th reduction of $w$, then $\ell_1=|u_1| = |aabbabbaabab|=12$ and $\ell_2=|u_2|=|aabbabba|=8$.

Finally, we construct the colored Dyck path $D_w$ associated with $w$ as follows:
\begin{center}
\begin{tikzpicture}[scale=0.3]
\draw [step=1,thin,gray!40] (0,0) grid (40,11);
\draw [very thick] (0,0) -- (10,10) -- (18,2) -- (23,7) -- (27,3) -- (32,8) -- (40,0);
\color{blue}
\node [left=1pt] at (1,1) {\scriptsize $a$};
\node [left=1pt] at (2,2) {\scriptsize $a$};
\node [left=1pt] at (3,3) {\scriptsize $b$};
\node [left=1pt] at (4,4) {\scriptsize $b$};
\node [left=1pt] at (5,5) {\scriptsize $a$};
\node [left=1pt] at (6,6) {\scriptsize $b$};
\node [left=1pt] at (7,7) {\scriptsize $b$};
\node [left=1pt] at (8,8) {\scriptsize $a$};
\node [left=1pt] at (9,9) {\scriptsize $b$};
\node [left=1pt] at (10,10) {\scriptsize $b$};
\node [left=1pt] at (19,3) {\scriptsize $a$};
\node [left=1pt] at (20,4) {\scriptsize $b$};
\node [left=1pt] at (21,5) {\scriptsize $a$};
\node [left=1pt] at (22,6) {\scriptsize $b$};
\node [left=1pt] at (23,7) {\scriptsize $b$};
\node [left=1pt] at (28,4) {\scriptsize $a$};
\node [left=1pt] at (29,5) {\scriptsize $a$};
\node [left=1pt] at (30,6) {\scriptsize $b$};
\node [left=1pt] at (31,7) {\scriptsize $b$};
\node [left=1pt] at (32,8) {\scriptsize $b$};
\end{tikzpicture}
\end{center}
\[ D_w = u^{|w_3'|}d^{\ell_2} u^{|w_2'|} d^{\ell_1-\ell_2} u^{|w_1'|} d^{20-\ell_1} = u^{10}d^8 u^5 d^4 u^5 d^8, \]
where the ascents are colored (from left to right) by $aabbabbabb$, $ababb$, and $aabbb$.

The above construction is reversible. Note that for a given Dyck path $D$ of semilength 20, colored by $aabbabbabb$, $ababb$, and $aabbb$, one can create a unique word $w_D$ by successively inserting the factor-free words at positions determined by the downs of the Dyck path.
\end{example}

\begin{theorem} \label{thm:bijection}
Let $\alpha, \beta\in\mathbb{N}$ with $\gcd(\alpha,\beta)=1$. The algorithm outlined in Example~\ref{ex:bijection} provides a bijection between the set of $\frac{\beta}{\alpha}$-Dyck paths of length $(\alpha+\beta)n$ and $\mathfrak{D}^{\Theta}_n(\alpha+\beta,0)$.
\end{theorem}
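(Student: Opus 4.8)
The plan is to make the informal procedure of Example~\ref{ex:bijection} precise and to exhibit an explicit two-sided inverse, deriving the bijection from the fact that the two constructions undo each other. First I would set up the forward map $w\mapsto D_w$ carefully. Given a nonempty $w\in\mathcal{D}_{\beta/\alpha}$ of length $(\alpha+\beta)n$, iterate the reduction $w=w^{(0)}\underset{w'_1}{\rightarrow}w^{(1)}\underset{w'_2}{\rightarrow}\cdots\underset{w'_r}{\rightarrow}w^{(r)}=\varepsilon$, where at each stage $w'_j$ is the left-most nonempty factor-free subword of $w^{(j-1)}$ and $\ell_j:=|u_j|$ is the length of the prefix preceding the inserted copy of $w'_j$. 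One checks that this process terminates (each reduction strictly decreases the length by a positive multiple of $\alpha+\beta$) and that $w^{(r-1)}=w'_r$ is itself factor-free, so every word decomposes uniquely as an ordered list of factor-free blocks $(w'_r,\dots,w'_1)$ together with the insertion positions. Then set
\[
 D_w \;=\; u^{|w'_r|}\,d^{\ell_{r-1}}\,u^{|w'_{r-1}|}\,d^{\ell_{r-2}-\ell_{r-1}}\cdots u^{|w'_2|}\,d^{\ell_1-\ell_2}\,u^{|w'_1|}\,d^{(\alpha+\beta)n-\ell_1},
\]
with the $j$th ascent colored by $w'_j$. The key points to verify here are that $D_w$ is an honest Dyck path, i.e.\ that the partial sums of ascent lengths minus preceding descent lengths stay nonnegative — this follows because $\ell_j$ is a valid insertion position in $w^{(j)}$, hence $\ell_j\le|w^{(j)}|$, and the descents consumed so far have total length $(\alpha+\beta)n-|w^{(j)}|-|w'_{j+1}|-\cdots-|w'_r|$ bounded appropriately — and that each ascent has length a multiple of $\alpha+\beta$ (immediate, since $|w'_j|$ is) so that $D_w\in\mathfrak{D}^{\Theta}_n(\alpha+\beta,0)$.

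Next I would define the inverse map $D\mapsto w_D$. Given a colored path in $\mathfrak{D}^{\Theta}_n(\alpha+\beta,0)$ with maximal ascents of lengths $|w'_r|,\dots,|w'_1|$ (read left to right) colored by factor-free words $w'_r,\dots,w'_1$, recover the insertion positions from the descent runs between consecutive ascents and reconstruct $w^{(r-1)}=w'_r$, then $w^{(r-2)}=(w^{(r-1)}\underset{\ell_{r-1}}{\leftarrow}w'_{r-1})$, and so on, finally obtaining $w_D=w^{(0)}$. The crucial claim, which I would isolate as a lemma, is that this reconstruction is consistent with the reduction algorithm: when we extend $w^{(j)}$ by the factor-free word $w'_j$ at the position dictated by the path, the left-most nonempty factor-free subword of the resulting word is exactly that inserted copy of $w'_j$. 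This is where the geometry of the path is essential — the descents are arranged so that the newly inserted block sits to the \emph{left} of everything already present at the point where it first becomes "completable," which is precisely the statement that no proper factor-free subword lies to its left. Granting this lemma, $w\mapsto D_w$ and $D\mapsto w_D$ are mutually inverse by induction on $r$: one shows $D_{w_D}=D$ and $w_{D_w}=w$ by peeling off the last block.

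The main obstacle is the lemma just described, i.e.\ proving that the greedy "left-most factor-free" reduction is exactly inverted by the path-prescribed extension, and in particular that the insertion positions $\ell_1\ge\ell_2\ge\cdots\ge\ell_{r-1}$ produced by the forward map are consistent with a genuine lattice path (this monotonicity-type condition is what guarantees $D_w$ stays weakly above the $x$-axis). I expect this to hinge on a careful analysis of where a factor-free subword can begin: if $w=uw'v$ with $w'$ the left-most factor-free block, then $u$ itself contains no factor-free subword, and one must track how inserting the \emph{next} block (which, in the reconstruction, is appended to the left within the current prefix) interacts with the valuation condition $h(\text{left factor})\ge 0$. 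Once this structural fact is nailed down, the remaining verifications — termination, the multiple-of-$(\alpha+\beta)$ ascent condition, the Dyck property of $D_w$, and the two inversion identities — are routine bookkeeping, and the theorem follows; the count matches $|\mathfrak{D}^{\Theta}_n(\alpha+\beta,0)|$ automatically by \cite[Theorem~3.5]{BGMW} together with \eqref{eq:yw}.
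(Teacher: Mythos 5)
Your plan follows the paper's proof essentially step for step: greedy removal of factor-free blocks to build $D_w$, positional re-insertion to build $w_D$, and a key lemma asserting that the two operations undo each other. You are right that everything hinges on that lemma together with the strict monotonicity $\ell_1>\ell_2>\cdots$ of the removal positions, and right that this is the hard part --- but you leave it unproved, and in fact it is \emph{false} for the ``left-most'' selection rule that you (and the paper) use. Take $\alpha=2$, $\beta=3$ and
\[
  w \;=\; aaaabbbbaabbbbb \in \mathcal{D}_{3/2}, \qquad n=3.
\]
Its prefix heights are $3,6,9,12,10,8,6,4,7,10,8,6,4,2,0$, so $w$ is a prime Dyck word. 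Its left-most factor-free subword is $aabbb$ at positions $3$--$7$ (no Dyck subword starts at position $1$ or $2$), so $\ell_1=2$ and the reduction yields $aabaabbbbb$. The left-most factor-free subword of $aabaabbbbb$ is $aabbb$ at positions $4$--$8$, so $\ell_2=3>\ell_1$, and the prescribed path $u^{5}d^{\ell_2}u^{5}d^{\ell_1-\ell_2}u^{5}d^{15-\ell_1}$ has a descent of length $-1$. The monotonicity you hope to establish simply does not hold, so the concluding ``routine bookkeeping'' of your plan cannot be carried out as written.

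The theorem itself is not in doubt (the counting identity \eqref{eq:yw} already guarantees that a bijection exists), but the selection rule must be changed so that the block removed at each step is the one inserted \emph{last} by the reconstruction map; in the example above that is the \emph{right-most} factor-free subword, $aabbb$ at positions $9$--$13$, which gives $\ell_1=8$, $\ell_2=2$ and the legitimate colored path $u^5d^2u^5d^6u^5d^7$, exactly inverting the insertion procedure. To be fair, the paper's own proof asserts $\ell_j-\ell_{j+1}\ge 1$ ``by construction'' with no more justification than you offer, so the gap you flagged is present in both places; your write-up has the virtue of naming it explicitly, but closing it requires repairing the algorithm (and then actually proving the inversion lemma for the corrected rule), not merely verifying details.
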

\begin{proof}
Let $w$ be a word in $\mathcal{D}_{\beta/\alpha}$ of length $(\alpha+\beta)n$. Without loss of generality, we assume that the corresponding lattice path stays strongly below the line $y=\frac{\beta}{\alpha}x$ (except at the endpoints). For a general path, we just look at its connected components separately.

If $w$ is factor-free, then we define
\[ D_w = u^{(\alpha+\beta)n}d^{(\alpha+\beta)n} \]
and color the ascent with $w$. Clearly $D_w\in \mathfrak{D}^{\Theta}_n(\alpha+\beta,0)$.

If $w$ is not a factor-free word, then it can be factored uniquely as $w = u_1 w_1' v_1$, where $w_1'$ is a factor-free word in $\mathcal{D}_{\beta/\alpha}$, $h(u_1)>0$, and $h(v_1)<0$. Let $\ell_1 = |u_1|$ and let $u_1v_1$ be the reduction of $w$ by $w_1'$. If $u_1v_1$ is factor-free, then we denote it by $w_2'$ and define
\[ D_w= u^{|w_2'|} d^{\ell_1} u^{|w_1'|} d^{(\alpha+\beta)n-\ell_1}. \]
Otherwise, we write $u_1v_1$ as $u_2 w_2' v_2$, where $w_2'$ is the left-most factor-free word contained in $u_1v_1$. We let $\ell_2 = |u_2|$ and look at the reduction $u_2v_2$. We continue this process inductively until we get a factor-free reduction.

Suppose $w\in \mathcal{D}_{\beta/\alpha}$ is a word of length $(\alpha+\beta)n$ such that after $k-1$ reductions
\[ w \underset{w_1'}{\longrightarrow} u_1v_1 \underset{w_2'}{\longrightarrow} u_2v_2
   \underset{w_3'}{\longrightarrow} \cdots \underset{w_{k-1}'}{\longrightarrow} u_{k-1}v_{k-1} \]
we arrive at a factor-free word $w_k'=u_{k-1}v_{k-1}$. Let $\ell_j=|u_j|$ for $j\le k-1$, and define
\[ D_w= u^{|w_k'|} d^{\ell_{k-1}} u^{|w_{k-1}'|} d^{(\ell_{k-2}-\ell_{k-1})} \cdots
  u^{|w_2'|} d^{(\ell_1-\ell_2)} u^{|w_1'|} d^{(\alpha+\beta)n-\ell_1}. \]
By construction, there is a total of $(\alpha+\beta)n$ downs, $|w_1'|+\cdots+|w_k'|=(\alpha+\beta)n$, and for every $j=1,\dots,k-1$:
\begin{itemize}
\item[$\circ$] $|w_j'| \equiv 0 \!\mod (\alpha+\beta)$,
\item[$\circ$] $\ell_j-\ell_{j+1}\ge 1$ (letting $\ell_k=0$),
\item[$\circ$] $\ell_j\le |w_{j+1}'|+\dots+|w_{k}'|$.
\end{itemize}
In other words, $D_w$ represents a Dyck path of semilength $(\alpha+\beta)n$. Coloring each ascent $u^{|w_j'|}$ with the factor-free word $w_j'$, we get an element of $\mathfrak{D}^{\Theta}_n(\alpha+\beta,0)$.

The above algorithm can be easily reversed. Let $D$ be an element of $\mathfrak{D}^{\Theta}_n(\alpha+\beta,0)$ with $k$ peaks, whose ascents are colored by the factor-free words $w_1,\dots,w_k$. Thus $D$ must be of the form
\[ D = u^{|w_1|}d^{j_1}u^{|w_2|}d^{j_2} \cdots u^{|w_k|}d^{j_k}, \]
where each $|w_j|$ is a multiple of $\alpha+\beta$, $|w_1|+\dots+|w_k|=(\alpha+\beta)n$, and for every $i$ we have $|w_1|+\dots+|w_i|\ge j_1+\dots+j_i$. Once again, without loss of generality, we assume that $D$ stays strongly above the $x$-axis (it has no interior touch point).

We define $w_D$ by repeated insertion of the coloring factor-free words:
\[ w_D = \Big((\cdots((w_1 \underset{j_1}{\leftarrow} w_2) \underset{j_1+j_2}{\leftarrow} w_3)\cdots)
   \underset{j_1+\dots+j_{k-1}}{\longleftarrow} w_k\Big) \]
going through the ascents of $D$ from left to right. By construction, $w_D$ is clearly a Dyck word of length $(\alpha+\beta)n$ with slope $\beta/\alpha$. This finishes our bijection.
\end{proof}

\begin{definition}
Every word $w\in \mathcal{D}_{\beta/\alpha}$ has a maximal number of factor-free subwords by which the word can be reduced. We call the number of factor-free ``factors'' contained in $w$ the {\em reducibility level} of $w$, and denote it by $\rl(w)$. Thus $\rl(w)=1$ if and only if $w$ is a factor-free word. For example, for $w=aabbabbaababaabbbbbb$ as in Example~\ref{ex:bijection}, we have $\rl(w)=3$. We define $\rl(\varepsilon)=0$.
\end{definition}

\begin{proposition}[Integer slope]
If $\alpha=1$, then $ab^{\beta}$ is the only factor-free word in $\mathcal{D}_{\beta}$, and every word $w\in \mathcal{D}_{\beta}$ of length $(1+\beta)n$ has reducibility level $n$. Conversely, if $\beta>\alpha$ and $\rl(w)=n$ for every $w\in \mathcal{D}_{\beta/\alpha}$ with $|w|=(\alpha+\beta)n$, then $\alpha=1$.
\end{proposition}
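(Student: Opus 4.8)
The plan is to prove the two implications separately. Both rely on two facts: the length of any word in $\mathcal{D}_{\beta/\alpha}$ is a multiple of $N:=\alpha+\beta$; and deleting a subword that itself lies in $\mathcal{D}_{\beta/\alpha}$ leaves a word in $\mathcal{D}_{\beta/\alpha}$ (if $w=uw'v$ with $w,w'\in\mathcal{D}_{\beta/\alpha}$, then $h(uv)=h(w)-h(w')=0$ and every left factor of $uv$ has the same valuation as the corresponding left factor of $w$, hence is $\ge 0$). It is also useful to note that, since $\rl(w)=1$ iff $w$ is factor-free, a word $w$ is factor-free precisely when it has no subword in $\mathcal{D}_{\beta/\alpha}$ other than $\varepsilon$ and $w$ itself.

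For the forward direction, assume $\alpha=1$. A nonempty word of the minimal length $N=1+\beta$ has a single $a$ and $\beta$ $b$'s, and the prefix condition forces it to start with $a$; hence it equals $ab^\beta$, which is therefore the unique word of minimal length and, in particular, factor-free. Now let $w\in\mathcal{D}_\beta$ have length $Nn$ with $n\ge 2$. Since $w$ begins with $a$, write $w=ab^{m_1}ab^{m_2}\cdots ab^{m_n}$ with $m_1+\dots+m_n=\beta n$; by pigeonhole some $m_j\ge\beta$, so $ab^\beta$ occurs as a subword of $w$ (and is neither $\varepsilon$ nor $w$), whence $w$ is not factor-free. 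Thus $ab^\beta$ is the only factor-free word in $\mathcal{D}_\beta$. Consequently every reduction removes exactly $N$ letters, and every nonempty word of $\mathcal{D}_\beta$ admits a reduction (it equals $ab^\beta$ or contains it as a subword); so starting from a word of length $Nn$, every maximal chain of reductions has exactly $n$ steps and ends at $\varepsilon$, i.e.\ $\rl(w)=n$.

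For the converse I would argue by contrapositive: assuming $\beta>\alpha\ge 2$, I must produce a word $w$ whose reducibility level differs from $|w|/N$. Applying the hypothesis in length $2N$ forces $\theta_2=0$, because a factor-free word of length $2N$ would have $\rl=1\ne 2$; so it suffices to show $\theta_2\ge 1$ whenever $\beta>\alpha\ge 2$, and then any factor-free $W$ with $|W|=2N$ is the desired word. Specializing \eqref{eq:yw} to $n=2$ (with $B_{2,1}(x_1,x_2)=x_2$ and $B_{2,2}(x_1)=x_1^2$) gives $\phi_2=\theta_2+N\theta_1^2$; since $\theta_1=\phi_1=f_1$ and $\phi_2=\tfrac12 f_1^2+f_2$ by \eqref{eq:rationalPaths}, we obtain
\[
 \theta_2=f_2-\tfrac{2N-1}{2}f_1^2=\frac{1}{2N^2}\Big(N\binom{2N}{2\alpha}-(2N-1)\binom{N}{\alpha}^2\Big).
\]
As $\theta_2$ is a nonnegative integer, it suffices to check $\binom{2N}{2\alpha}>\tfrac{2N-1}{N}\binom{N}{\alpha}^2$. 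By Vandermonde's identity, $\binom{2N}{2\alpha}=\sum_i\binom{N}{i}\binom{N}{2\alpha-i}\ge\binom{N}{\alpha}^2+2\binom{N}{\alpha-1}\binom{N}{\alpha+1}$, and $\binom{N}{\alpha-1}\binom{N}{\alpha+1}=\tfrac{\alpha\beta}{(\alpha+1)(\beta+1)}\binom{N}{\alpha}^2\ge\tfrac12\binom{N}{\alpha}^2$ since $(\alpha-1)(\beta-1)\ge 2$ for $\alpha\ge 2,\ \beta\ge 3$. Hence $\binom{2N}{2\alpha}\ge 2\binom{N}{\alpha}^2>\tfrac{2N-1}{N}\binom{N}{\alpha}^2$, so $\theta_2\ge 1$, and $\alpha=1$ follows.

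The main obstacle I anticipate is the converse: one must be sure the hypothesis genuinely collapses to $\theta_2=0$, and then verify the binomial inequality for every admissible $\beta>\alpha\ge 2$. That inequality is tight at $(\alpha,\beta)=(2,3)$, where $\binom{N}{\alpha-1}\binom{N}{\alpha+1}=\tfrac12\binom{N}{\alpha}^2$ exactly, so it is precisely the slack $\tfrac{2N-1}{N}<2$ that closes the argument and should be made explicit.
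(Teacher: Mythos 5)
Your forward direction is essentially the paper's argument: the paper locates a copy of $ab^{\beta}$ after the right-most $a$, while you produce one by pigeonhole on the block decomposition $ab^{m_1}\cdots ab^{m_n}$; both rest on the same observation and both are correct. The converse is where you genuinely diverge. The paper simply exhibits the explicit word $w=a^{\alpha}b^{\beta-1}aba^{\alpha-1}b^{\beta}$ and asserts $\rl(w)=1$; you instead deduce $\theta_2\ge 1$ from the enumerative identities \eqref{eq:rationalPaths} and \eqref{eq:yw}, via $\theta_2=f_2-\tfrac{2N-1}{2}f_1^2$ (with $N=\alpha+\beta$) and a Vandermonde estimate. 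Your computation checks out, including the near-tight case $(\alpha,\beta)=(2,3)$ where indeed $\theta_2=3$; the only cost is that it imports Duchon's identity \eqref{eq:Thm9}, a much heavier tool than a one-line example. That said, your detour buys something real here: the paper's word is in fact \emph{not} factor-free. It factors as $w=(a^{\alpha}b^{\beta-1})(aba^{\alpha-1}b^{\beta-1})(b)$, and one checks that $aba^{\alpha-1}b^{\beta-1}$ and the complementary word $a^{\alpha}b^{\beta}$ both lie in $\mathcal{D}_{\beta/\alpha}$ whenever $\beta>\alpha$ (for slope $3/2$, $aabbababbb=(aabb)(ababb)(b)$ with $ababb$ and $aabbb$ both Dyck words), so $\rl(w)=2$ and the paper's counterexample fails as stated. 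A correct explicit witness does exist (e.g.\ $aaabbabbbb$ is factor-free for slope $3/2$), but your counting argument establishes $\theta_2\ge 1$ uniformly in $(\alpha,\beta)$ without having to verify any particular candidate word --- exactly the robustness one wants given how easy it is to get such ad hoc examples wrong.
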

\begin{proof}
Let $w\in\mathcal{D}_{\beta}$ be of length $(1+\beta)n$ with $n>1$. By definition, $h(a)=\beta$ and $h(b)=-1$. Thus the right-most $a$ in $w$ must be followed by a string containing $b^\beta$. Hence $w$ can be reduced by $ab^\beta$, so it is not factor-free and $\rl(w)=n$.

In order to prove the last statement, assume $\alpha>1$ and let $w=a^\alpha b^{\beta-1}aba^{\alpha-1}b^\beta$. Then we have $|w|=2(\alpha+\beta)$ and $\rl(w)=1$, which contradicts the assumption that $\rl(w)=2$ for every $w\in \mathcal{D}_{\beta/\alpha}$ with $|w|=2(\alpha+\beta)$.
\end{proof}

\begin{remark}
From the previous proposition, it is obvious that any reducibility level statistic is only meaningful for rational Dyck paths with non-integer slopes.
\end{remark}

\begin{theorem}\label{thm:Narayana-like}
The number $r_{n,k}$ of $\frac{\beta}{\alpha}$-Dyck paths of length $(\alpha+\beta)n$ that have reducibility level equal to $k$ is given by
\begin{equation} \label{eq:rl_stats}
  r_{n,k} = \binom{(\alpha + \beta)n}{k-1} \frac{(k-1)!}{n!} B_{n, k}(1! \theta_1,2! \theta_2,\dots),
\end{equation}
where $\theta_j$ is the number of factor-free words in $\mathcal{D}_{\beta/\alpha}$ of length $(\alpha+\beta)j$.
\end{theorem}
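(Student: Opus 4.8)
The plan is to combine the bijection of Theorem~\ref{thm:bijection} with a count of colored Dyck paths refined by the number of peaks. The first point is that $w\mapsto D_w$ is reducibility-level preserving: in the construction of Example~\ref{ex:bijection} the successive factor-free factors $w_1',\dots,w_k'$ peeled off $w$ become precisely the $k$ ascents of $D_w$ (ascent $j$ colored by $w_j'$), so $\rl(w)=k$ if and only if $D_w$ has $k$ peaks. (One should also note that $\rl(w)$ is well defined, i.e.\ that every maximal reduction chain of $w$ has the same length: reducing by a factor-free subword leaves the remaining factor-free factors unchanged, so the reduction process is confluent.) Hence
\[ r_{n,k}=\#\bigl\{D\in\mathfrak{D}^{\Theta}_n(\alpha+\beta,0):\ D\text{ has exactly }k\text{ peaks}\bigr\}. \]

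It then remains to evaluate the right-hand side, which is the peak-refined form of \cite[Theorem~3.5]{BGMW}; I would prove it directly as follows. An element of $\mathfrak{D}^{\Theta}_n(\alpha+\beta,0)$ with $k$ peaks is the same as a choice of positive integers $m_1,\dots,m_k$ with $m_1+\cdots+m_k=n$, a color in $\{1,\dots,\theta_{m_i}\}$ for each ascent $i$, and a composition $(j_1,\dots,j_k)$ of $N:=(\alpha+\beta)n$ into $k$ positive parts satisfying $j_1+\cdots+j_l\le(\alpha+\beta)(m_1+\cdots+m_l)$ for $l=1,\dots,k-1$; let $M(a_1,\dots,a_k)$ denote the number of such admissible descent compositions when the ascent lengths are $a_i=(\alpha+\beta)m_i$. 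Because the color weight $\theta_{m_1}\cdots\theta_{m_k}$ is invariant under cyclic rotation of $(m_1,\dots,m_k)$, a reindexing shows that the count equals $\frac1k\bigl(\sum_{m_1+\cdots+m_k=n}\theta_{m_1}\cdots\theta_{m_k}\bigr)\cdot S$, where $S=\sum_{t=0}^{k-1}M(\sigma^t\vec a)$ is, for any composition $\vec a$ of $N$ into positive parts, the total number of admissible descent compositions over the $k$ cyclic shifts $\sigma^t\vec a$. Encoding a descent composition by its partial sums and recording, for each shift, how the $k-1$ ``break points'' are distributed among the $k$ ascent blocks, admissibility becomes a Dvoretzky--Motzkin condition, and the cycle lemma (``exactly one good rotation'') collapses the $k$-fold sum to $S=[x^{k-1}](1+x)^{a_1}\cdots(1+x)^{a_k}=\binom{N}{k-1}$. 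This gives
\[ r_{n,k}=\frac1k\binom{N}{k-1}\sum_{m_1+\cdots+m_k=n}\theta_{m_1}\cdots\theta_{m_k}, \]
and \eqref{eq:rl_stats} follows since $\sum_{m_1+\cdots+m_k=n}\theta_{m_1}\cdots\theta_{m_k}=[t^n]\bigl(\sum_{m\ge1}\theta_m t^m\bigr)^k=\tfrac{k!}{n!}B_{n,k}(1!\theta_1,2!\theta_2,\dots)$, while $\tfrac1k\binom{N}{k-1}\tfrac{k!}{n!}=\binom{N}{k-1}\tfrac{(k-1)!}{n!}$.

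I expect the cycle-lemma step to be the main obstacle, the delicate point being to set up the Dvoretzky--Motzkin bookkeeping with the right boundary behavior: after the shift $c_l\mapsto c_l-1$ the step sequence one feeds to the cycle lemma sums to $-1$ rather than $0$, and it is exactly this unit of slack that makes precisely one cyclic rotation admissible. The rotation-averaging and the Bell-polynomial rewriting are routine by comparison. Finally, a shortcut is available: once the bijection and the reduction to $k$-peak paths are in hand, one may simply invoke the peak-refined form of \cite[Theorem~3.5]{BGMW}, namely that the $k$-th summand of \eqref{eq:yw} equals the number of elements of $\mathfrak{D}^{\Theta}_n(\alpha+\beta,0)$ with exactly $k$ peaks, and omit the cycle-lemma computation altogether.
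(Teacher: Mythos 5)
Your proposal is correct and follows the same route as the paper: the bijection of Theorem~\ref{thm:bijection} identifies $r_{n,k}$ with the number of paths in $\mathfrak{D}^{\Theta}_n(\alpha+\beta,0)$ having exactly $k$ peaks, and the formula is then read off from the peak-refined statement of \cite[Theorem~3.5]{BGMW} --- precisely the ``shortcut'' you mention at the end, which is all the paper does. The only difference is that you additionally supply a self-contained proof of that cited ingredient (rotation-averaging over cyclic shifts of the ascent composition plus a cycle-lemma evaluation giving $S=\binom{(\alpha+\beta)n}{k-1}$, hence $r_{n,k}=\tfrac1k\binom{(\alpha+\beta)n}{k-1}\sum\theta_{m_1}\cdots\theta_{m_k}$); this is sound in outline and is essentially how the formula in \cite{BGMW} is obtained, so it adds a verification the paper delegates to the reference rather than a genuinely different argument.
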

\begin{proof}
As a direct consequence of the bijection given in Theorem~\ref{thm:bijection}, we get that $r_{n,k}$ is equal to the number of Dyck paths in $\mathfrak{D}^{\Theta}_n(\alpha+\beta,0)$ having exactly $k$ peaks. Thus \eqref{eq:rl_stats} follows from \cite[Theorem 3.5]{BGMW}.
\end{proof}

\begin{example}
If $\alpha=1$, then $ab^{\beta}$ is the only factor-free word in $\mathcal{D}_{\beta}$, so $\theta_1=1$ and $\theta_j=0$ for $j\not=1$. Therefore, the number of words $w$ of length $(1+\beta)n$ with $\rl(w)=k$ is zero unless $k=n$. In that case \eqref{eq:rl_stats} gives
\begin{equation*}
  r_{n,n} = \binom{(1 + \beta)n}{n-1} \frac{(n-1)!}{n!} = \frac{1}{\beta n+1} \binom{(\beta+1)n}{n},
\end{equation*}
which is, as expected, the total number of words of length $(1+\beta)n$ in $\mathcal{D}_{\beta}$. 
\end{example}

\begin{example}
As mentioned in the introduction, for $\alpha=2$ and $\beta=3$, the number of factor-free words of length $5n$ is given by the sum of adjacent Catalan numbers $C_n+C_{n-1}$. Thus the number of $\frac32$-Dyck paths of length $5n$ having reducibility level $k$ is giving by
\begin{align*}
  r_{n,k} &= \binom{5n}{k-1}\frac{(k-1)!}{n!} B_{n,k}(1!(C_0+C_1), 2!(C_1+C_2),\dots),
\intertext{which by means of \cite[Example~3.2]{WW09} can be written as}
  r_{n,k} &= \binom{5n}{k-1} \sum_{j=0}^k \frac{(-1)^{k-j}(2j-k)}{nk} \binom{k}{j} \binom{2(n+j)-k-1}{n-1}.
\end{align*}
For example, among the 23 words of length 10 ($n=2$) there are $r_{2,1}=3$ factor-free words and $r_{2,2}=20$ words $w$ with $\rl(w)=2$. And among the 377 words of length 15 there are $7$ factor-free words, $90$ words with $\rl(w)=2$, and $280$ words with $\rl(w)=3$.
\end{example}

\section{An inverse relation involving partial Bell polynomials}
\label{sec:inverse_relation}

The purpose of this section is to prove an inverse relation for a family of sequences defined through partial Bell polynomials. The main ingredients are Fa{\`a} di Bruno's formula, expressed in terms of partial Bell polynomials as in \cite[Sec.~3.4, Theorem~A]{Comtet}, together with a substitution formula introduced by the authors in \cite{BGW12}. For convenience, we recall here these results as lemmas.

\begin{lemma}[Fa{\`a} di Bruno] \label{lem:FadiBruno}
Let $f$ and $g$ be two formal power series:
\begin{equation*}
  f = f_0 + \sum_{k=1}^\infty f_k \frac{u^k}{k!} \;\text{ and }\; g = \sum_{\ell=1}^\infty g_\ell \frac{t^\ell}{\ell!}.
\end{equation*}
If $h = \sum_{n=0}^\infty h_n \frac{t^n}{n!}$ is the formal power series of the composition $f\circ g$, then the coefficients $h_n$ are given by
\begin{equation*}
  h_0 = f_0, \quad  h_n = \sum_{k=1}^{n} f_k B_{n, k}(g_1,g_2,\dots).
\end{equation*}
\end{lemma}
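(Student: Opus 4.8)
The plan is to reduce the statement to a single formal power series computation in $t$. First I would observe that, because $g$ has zero constant term, the coefficient of $t^n$ in $f\circ g$ involves only finitely many terms; hence $f\circ g$ is a well-defined formal power series, the $h_n$ are determined, and we may write $f\circ g = f_0 + \sum_{k\ge 1}\frac{f_k}{k!}\,g^k$. The entire argument then hinges on expanding the powers $g^k$ in the divided-power basis $\{t^n/n!\}$.

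The key identity I would invoke is
\begin{equation*}
  g^k = \Bigl(\sum_{\ell\ge 1} g_\ell \frac{t^\ell}{\ell!}\Bigr)^{\!k}
      = \sum_{n\ge k} k!\, B_{n,k}(g_1,g_2,\dots)\,\frac{t^n}{n!},
\end{equation*}
which is exactly the generating-function characterization of the partial Bell polynomials; equivalently, it follows by multinomial expansion of $g^k$, collecting the monomials of total degree $n$ and using the explicit formula $B_{n,k}(x_1,x_2,\dots)=\sum \frac{n!}{j_1!\,j_2!\cdots}\prod_{i\ge1}\bigl(\frac{x_i}{i!}\bigr)^{j_i}$, the sum being over $j_i\ge 0$ with $\sum_i j_i=k$ and $\sum_i i\,j_i=n$. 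Substituting this into the expansion of $f\circ g$ and interchanging the (per degree finite) summations over $k$ and $n$ gives $f\circ g = f_0 + \sum_{n\ge1}\bigl(\sum_{k=1}^n f_k\, B_{n,k}(g_1,g_2,\dots)\bigr)\frac{t^n}{n!}$, and comparing the coefficients of $t^n/n!$ on both sides yields $h_0=f_0$ and $h_n=\sum_{k=1}^n f_k\, B_{n,k}(g_1,g_2,\dots)$.

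I do not expect any genuine obstacle here: the result is classical (see \cite{Comtet}) and is recalled only to fix notation. The one point requiring mild care is justifying the rearrangement of the double sum, which is legitimate precisely because $g$ has no constant term, so that each $t^n$-coefficient is a finite sum; and if one prefers the set-partition definition of $B_{n,k}$ (a sum over partitions of an $n$-element set into $k$ blocks, a block of size $m$ weighted by $x_m$), then the same identity for $g^k$ is an immediate instance of the exponential formula, making the verification essentially automatic.
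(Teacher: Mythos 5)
Your proof is correct and is essentially the standard argument: the paper does not prove this lemma at all but simply cites it from Comtet, and your derivation via the generating-function identity $\frac{1}{k!}g^k=\sum_{n\ge k}B_{n,k}(g_1,g_2,\dots)\frac{t^n}{n!}$ is exactly the classical route taken there. Nothing further is needed.
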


\begin{lemma}[{\cite[Theorem 15]{BGW12}}] \label{lem:lambdaThm}
Let $a,b\in\mathbb{Z}$. Given any sequence $(x_n)$, define $(y_n)$ by
\begin{equation*}
y_n = \sum_{k=1}^n \binom{an+bk}{k-1}\frac{(k-1)!}{n!} B_{n,k}(1!x_1,2!x_2,\dots)
\end{equation*}
for every $n\in\mathbb{N}$. Then, for any $\lambda\in\mathbb{C}$ we have
\begin{equation*}
  \sum_{k=1}^n\tbinom{\lambda}{k-1}(k-1)!B_{n,k}(1!y_1,2!y_2,\dots)
  = \sum_{k=1}^n\tbinom{\lambda+an+bk}{k-1}(k-1)!B_{n,k}(1!x_1,2!x_2,\dots).
\end{equation*}
\end{lemma}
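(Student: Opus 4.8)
The plan is to translate the statement into generating functions via Fa{\`a} di Bruno's formula (Lemma~\ref{lem:FadiBruno}) and then apply Lagrange inversion twice. Fix $n\ge1$, write $X(t)=\sum_{m\ge1}x_mt^m$ and $Y(t)=\sum_{m\ge1}y_mt^m$, and recall the instance $\tfrac{n!}{k!}[t^n]X(t)^k=B_{n,k}(1!x_1,2!x_2,\dots)$ of Lemma~\ref{lem:FadiBruno} (outer function $u^k/k!$), and likewise for $Y$. Using $\tfrac{(k-1)!}{k!}\binom{\lambda}{k-1}=\tfrac1k\binom{\lambda}{k-1}=\tfrac1{\lambda+1}\binom{\lambda+1}{k}$ and summing a binomial series, the left-hand side of the asserted identity equals $\tfrac{n!}{\lambda+1}[t^n]\bigl(1+Y(t)\bigr)^{\lambda+1}$, while the right-hand side equals $n!\sum_{k\ge1}\tfrac1k\binom{\lambda+an+bk}{k-1}[t^n]X(t)^k$. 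Both sides of the identity are polynomials in $\lambda$ of degree $<n$, so it is enough to prove the identity for those $\lambda$ avoiding the two roots of $(\lambda+1)(\lambda+1+an)$; for such $\lambda$ the rewritings just made, and all divisions below, are legitimate.

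The first ingredient is a Lagrange-inversion formula. Let $\tilde G=\tilde G(z)$ be the unique formal power series with $\tilde G(0)=1$ and $\tilde G=1+X(z)\tilde G^b$. I claim that, for every $m$ and every $n\ge1$, one has the identity $(\ast)$:
\[
  [z^n]\tilde G^m=m\sum_{k=1}^n\frac1k\binom{m-1+bk}{k-1}\,[z^n]X(z)^k.
\]
Indeed, $\chi(u):=(u-1)u^{-b}$ has $\chi(1)=0$ and $\chi'(1)=1$, hence admits a compositional inverse $\Gamma$ with $\Gamma(0)=1$ solving $\Gamma=1+w\Gamma^b$; and $(\tilde G-1)\tilde G^{-b}=X(z)$ gives $\tilde G=\Gamma\!\circ\!X$. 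Writing $S=\Gamma-1$, so $S=w(1+S)^b$, Lagrange inversion yields $[w^k]\Gamma^m=[w^k](1+S)^m=\tfrac1k[S^{k-1}]\,m(1+S)^{m-1+bk}=\tfrac mk\binom{m-1+bk}{k-1}$, and then $\tilde G^m=\Gamma(X(z))^m=\sum_{k\ge0}\bigl([w^k]\Gamma^m\bigr)X(z)^k$ gives $(\ast)$ upon extracting $[z^n]$ (the $k=0$ term drops for $n\ge1$).

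The second ingredient produces the $an$-shift. Let $G=G(t)$ be the unique power series with $G(0)=1$ and $G=1+X(tG^a)G^b$; substituting $z=tG(t)^a$ turns this equation into $G=1+X(z)G^b$, so $G$ coincides with $\tilde G$ as a series in $z$, while $t=z\,\tilde G(z)^{-a}$. Lagrange inversion then gives, for any $\mu$,
\[
  [t^n]G(t)^{\mu}=\frac\mu n\,[z^{n-1}]\,\tilde G(z)^{\mu+an-1}\tilde G'(z)=\frac{\mu}{\mu+an}\,[z^n]\tilde G^{\mu+an}.
\]
Taking $\mu=1$ and invoking $(\ast)$ with $m=1+an$ shows $[t^n]G(t)=\sum_{k\ge1}\tfrac1k\binom{an+bk}{k-1}[z^n]X(z)^k$, which is exactly $y_n$ once the definition of $y_n$ is rewritten via $\tfrac{(k-1)!}{n!}B_{n,k}(1!x_1,\dots)=\tfrac1k[t^n]X^k$; hence $G(t)=1+Y(t)$ (the low-order coefficients being checked directly). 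Finally, chaining the reductions of the first paragraph with the $\mu=\lambda+1$ case of the displayed Lagrange formula and then $(\ast)$ with $m=\lambda+1+an$,
\[
  \frac{n!}{\lambda+1}[t^n]\bigl(1+Y(t)\bigr)^{\lambda+1}=\frac{n!}{\lambda+1+an}\,[z^n]\tilde G^{\lambda+1+an}=n!\sum_{k\ge1}\frac1k\binom{\lambda+an+bk}{k-1}[z^n]X(z)^k,
\]
and since $[z^n]X(z)^k=[t^n]X(t)^k$ this is precisely the right-hand side.

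The one genuine obstacle is the term $an$ inside the binomial coefficient on the right: being $n$-dependent, it cannot arise from composing $X$ (or $Y$) with a single fixed outer series, and routing everything through the auxiliary variable $z=tG^a$ — equivalently, the factor $\tilde G(z)^{an}$ contributed by Lagrange inversion — is precisely the mechanism that creates it. Everything else is routine: existence and uniqueness of the formal solutions $G$ and $\tilde G$ are immediate from the recursive form of their defining equations, the case $n=1$ is trivial, and the conversions among falling factorials, binomial coefficients, and partial Bell polynomials are elementary.
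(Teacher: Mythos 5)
Your proof is correct. Note, however, that the paper itself offers no argument for this lemma: it is imported verbatim from \cite[Theorem~15]{BGW12}, so there is no internal proof to compare against, and what you have written is a self-contained replacement for that citation. Your route --- encoding $B_{n,k}(1!x_1,2!x_2,\dots)$ as $\tfrac{n!}{k!}[t^n]X(t)^k$, introducing the auxiliary series $\tilde G=1+X(z)\tilde G^{\,b}$ and $G=1+X(tG^a)G^b$, and composing two applications of Lagrange--B\"urmann so that the change of variable $z=tG(t)^a$ manufactures the $n$-dependent shift $an$ inside the binomial coefficient --- is sound, and you correctly identified that this shift is the one feature that cannot come from a single fixed outer composition. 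The delicate points are all handled: $(1+S)^m$ and $\tilde G^{\mu}$ are legitimate formal power series for arbitrary complex exponents, the divisions by $\lambda+1$ and $\lambda+1+an$ are repaired at the end by observing that both sides are polynomials in $\lambda$ of degree less than $n$, and the identification $G=1+Y$ follows from the $\mu=1$ case of your coefficient formula together with $(\ast)$ at $m=1+an$ (with $n=1$ checked separately, which also disposes of the only case where $1+an$ could vanish). The cited source reaches the same statement through a chain of convolution identities for partial Bell polynomials; your argument is shorter and more conceptual but leans on the full strength of Lagrange inversion with fractional exponents, whereas the Bell-polynomial route stays entirely at the level of finite algebraic identities. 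Either way, the lemma is established, and your proof could stand in for the external reference.
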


\begin{corollary}\label{cor:inverse}
If $y_n$ is given by
\begin{equation*}
y_n = \sum_{k=1}^n \binom{an}{k-1}\frac{(k-1)!}{n!} B_{n,k}(1!x_1,2!x_2,\dots),
\end{equation*}
then
\begin{equation*}
  x_n = \sum_{k=1}^n \binom{-an}{k-1}(k-1)!B_{n,k}(1!y_1,2!y_2,\dots).
\end{equation*}
\end{corollary}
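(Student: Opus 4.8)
The plan is to derive Corollary~\ref{cor:inverse} as a direct specialization of Lemma~\ref{lem:lambdaThm}. First I would set $b=0$ in that lemma: then $\binom{an+bk}{k-1}=\binom{an}{k-1}$, so the sequence $(y_n)$ produced there is exactly the one in the hypothesis of the corollary, and the lemma gives, for all $n\in\mathbb{N}$ and all $\lambda\in\mathbb{C}$,
\begin{equation*}
  \sum_{k=1}^n\binom{\lambda}{k-1}(k-1)!\,B_{n,k}(1!y_1,2!y_2,\dots)
  =\sum_{k=1}^n\binom{\lambda+an}{k-1}(k-1)!\,B_{n,k}(1!x_1,2!x_2,\dots).
\end{equation*}

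Next I would put $\lambda=-an$. On the right-hand side $\binom{\lambda+an}{k-1}=\binom{0}{k-1}$ equals $1$ for $k=1$ and $0$ for $k\ge 2$, so the sum collapses to its single $k=1$ term; since $B_{n,1}(c_1,c_2,\dots)=c_n$ for the partial Bell polynomial, that term is $B_{n,1}(1!x_1,2!x_2,\dots)=n!\,x_n$. Hence
\begin{equation*}
  \sum_{k=1}^n\binom{-an}{k-1}(k-1)!\,B_{n,k}(1!y_1,2!y_2,\dots)=n!\,x_n,
\end{equation*}
and dividing through by $n!$ gives the inversion formula for $x_n$.

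I do not expect any genuine obstacle here: all of the substance sits in Lemma~\ref{lem:lambdaThm} (which is taken from \cite{BGW12}), and the corollary amounts to choosing the parameters $b=0$ and $\lambda=-an$ and reading off the collapsed right-hand side. The only points that need a little care are that $\binom{-an}{k-1}$ is a legitimate integer-valued quantity because $an\in\mathbb{Z}$, and keeping track of the factor $n!$ that enters through $B_{n,1}(1!x_1,2!x_2,\dots)=n!\,x_n$. As a sanity check, running the same argument with the roles of $(x_n)$ and $(y_n)$ exchanged shows that the transformation with parameter $a$ and the one with parameter $-a$ are mutually inverse, which is the conceptual content of the statement.
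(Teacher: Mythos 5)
Your proof is correct and is exactly the intended argument: the paper states this corollary without proof as an immediate specialization of Lemma~\ref{lem:lambdaThm}, and your choice of $b=0$ followed by $\lambda=-an$, collapsing the right-hand side to the single term $B_{n,1}(1!x_1,2!x_2,\dots)=n!\,x_n$, is precisely that specialization. One remark: your derivation yields $x_n=\frac{1}{n!}\sum_{k=1}^n\binom{-an}{k-1}(k-1)!\,B_{n,k}(1!y_1,2!y_2,\dots)$, and this factor of $1/n!$ is absent from the corollary as printed; since every subsequent use of the corollary (e.g.\ in the proof of Proposition~\ref{prop:psi}, where the inverted sum carries the weight $\tfrac{(k-1)!}{n!}$) requires it, the printed statement has a typo and the version you proved is the correct one.
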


\medskip
For the results in Section~\ref{sec:enumeration}, we also need the following inverse relation.

\begin{proposition}\label{prop:inverse}
Let $a\in\mathbb{Z}$. Given any sequence $(x_n)$, define $(y_n)$ by
\begin{equation*}
  y_n = \sum_{k=1}^n \binom{an-1}{k-1}\frac{(k-1)!}{n!}B_{n,k}(1!x_1,2!x_2,\dots).
\end{equation*}
Then, for every $n\in\mathbb{N}$ we have
\begin{equation*}
 x_n=\sum_{k=1}^n \frac{(1-an)^{k-1}}{n!} B_{n,k}(1!y_1,2!y_2,\dots).
\end{equation*}
\end{proposition}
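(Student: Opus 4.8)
The plan is to recognize the defining relation for $(y_n)$ as an instance of Lemma~\ref{lem:lambdaThm} with a well-chosen pair $(a,b)$, and then invert it by a suitable specialization of $\lambda$. Concretely, the hypothesis reads
\[
 y_n = \sum_{k=1}^n \binom{an-1}{k-1}\frac{(k-1)!}{n!}B_{n,k}(1!x_1,2!x_2,\dots)
     = \sum_{k=1}^n \binom{an+(-1)k + (\text{shift})}{k-1}\frac{(k-1)!}{n!}B_{n,k}(1!x_1,\dots),
\]
but $an-1$ is not literally of the form $an+bk$; the $-1$ is a constant, not a multiple of $k$. So the first step is to absorb the $-1$ differently: since $\binom{an-1}{k-1}$ can be compared with $\binom{a'n+b'k}{k-1}$ only if $a'n+b'k$ agrees with $an-1$, one natural choice is to write $an-1$ as the value at the relevant index of $a'n + b'k$ with $a' = a$, $b' = 0$ after shifting $n\mapsto n$ — this does not work directly. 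Instead I would apply Lemma~\ref{lem:lambdaThm} in the form where the ``$an+bk$'' inside the binomial is $(a-1)n + 0\cdot k$? That also fails. The cleanest route: invoke Lemma~\ref{lem:lambdaThm} with parameters $(a,b)=(a,-1)$ is wrong too. Let me instead use Corollary~\ref{cor:inverse} as a template and adapt its proof: Corollary~\ref{cor:inverse} handles $\binom{an}{k-1}$; here we have $\binom{an-1}{k-1}$, which is the $\lambda = an-1$, i.e.\ ``$an + bk$'' with $a\to a$, $b\to 0$, but the extra $-1$ shifts $\lambda$. So the honest statement is: the hypothesis is Lemma~\ref{lem:lambdaThm} read at $\lambda$ generic with $(a_{\mathrm{Lem}}, b_{\mathrm{Lem}}) = (a, 0)$ applied to the sequence $(x_n)$, but evaluated with the left-hand $\lambda$ replaced by $-1$ on one side.

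So here is the actual plan. Set $a_{\mathrm{Lem}} = a$, $b_{\mathrm{Lem}} = 0$ in Lemma~\ref{lem:lambdaThm}, but first note the lemma's hypothesis requires $y_n$ to be built with binomial $\binom{an+0\cdot k}{k-1} = \binom{an}{k-1}$, whereas ours has $\binom{an-1}{k-1}$. To reconcile this, I would instead apply the lemma with the roles reversed: treat $(y_n)$ as the \emph{input} sequence and show that the claimed formula for $x_n$ is consistent. That is, define $\tilde x_n := \sum_{k=1}^n \frac{(1-an)^{k-1}}{n!}B_{n,k}(1!y_1,2!y_2,\dots)$ and verify that feeding $(\tilde x_n)$ back through the $y$-formula returns $(y_n)$; since the map $(x_n)\mapsto(y_n)$ is triangular and invertible (its linearization at each stage has invertible leading coefficient, because $B_{n,1}(1!x_1,\dots) = x_1\cdot 1!/\dots$ contributes $x_n$ with a nonzero coefficient), uniqueness of the inverse then forces $x_n = \tilde x_n$. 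To carry out the verification I would compute
\[
 \sum_{k=1}^n \binom{an-1}{k-1}\frac{(k-1)!}{n!}B_{n,k}(1!\tilde x_1,2!\tilde x_2,\dots)
\]
by first using Fa\`a di Bruno (Lemma~\ref{lem:FadiBruno}) to express the composition of power series: if $Y(t) = \sum y_n t^n/n!$ is viewed as the EGF, then $\tilde X$ corresponds to the composition $Y$ with the series whose coefficients are $(1-an)^{k-1}$ — which is exactly the ``substitution'' form that Lemma~\ref{lem:lambdaThm} (from \cite{BGW12}) was designed to handle. Indeed the kernel $(1-an)^{k-1}$, once the $n$-dependence is tracked, is precisely what appears when one specializes $\lambda$ so that $\binom{\lambda}{k-1}(k-1)!$ telescopes against the shift; setting $\lambda = -1$ and $(a_{\mathrm{Lem}},b_{\mathrm{Lem}}) = (-a, 0)$ in Lemma~\ref{lem:lambdaThm} should produce exactly the factor $(1-an)^{k-1}/(k-1)!$ after matching $\binom{\lambda+an+bk}{k-1}(k-1)! = \binom{-1}{k-1}(k-1)! = (-1)^{k-1}(k-1)!$ against $\binom{-an-1}{\cdot}$, and the bookkeeping of signs and the $n$-shift converts $(-1)^{k-1}(k-1)!$ composed with the $\binom{an-1}{k-1}$-kernel into $(1-an)^{k-1}$.

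More precisely, the key step I anticipate is the following identity, which I would isolate as the crux: for the sequence $(y_n)$ defined from $(x_n)$ as in the statement,
\[
 \sum_{k=1}^n \frac{(1-an)^{k-1}}{n!}B_{n,k}(1!y_1,2!y_2,\dots) = x_n.
\]
To prove it I would substitute the definition of $y_j$ into the Bell polynomial using the composition/substitution formula of \cite{BGW12} (the mechanism behind Lemma~\ref{lem:lambdaThm}), obtaining a double sum over a new index; the inner sum over that index should collapse by the Vandermonde-type identity $\sum_{j}\binom{aj-1}{\cdot}(\text{signed falling factorial}) = \delta$, leaving only the diagonal term $x_n$. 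The honest way to package this is to apply Lemma~\ref{lem:lambdaThm} directly: with $(a,b)\rightsquigarrow(a,0)$ it tells us that for the sequence $(x_n)$ (note $y_n$ has $\binom{an-1}{k-1}$, so I take the lemma's parameter ``$a$'' to be $a$ and compensate the $-1$ by choosing $\lambda$ appropriately on the right),
\[
 \sum_{k=1}^n \binom{\lambda}{k-1}(k-1)!B_{n,k}(1!y_1,\dots) = \sum_{k=1}^n\binom{\lambda+an}{k-1}(k-1)!B_{n,k}(1!x_1,\dots) \cdot (\text{correction for the }-1),
\]
and then setting $\lambda = 1-an$ makes the right-hand side equal to $\sum_k \binom{1}{k-1}(k-1)!B_{n,k}(1!x_1,\dots) = B_{n,1}(1!x_1,\dots) + B_{n,2}(1!x_1,\dots)$ when $n\le 2$ — wait, $\binom{1}{k-1}\neq 0$ only for $k=1,2$, which would not give $x_n$ for $n\geq 3$. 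This shows the naive $\lambda=1-an$ is not quite right; the correct specialization must interact with the $-1$ shift in $\binom{an-1}{k-1}$, and pinning down this specialization is the main obstacle. I expect the resolution is that the $-1$ in $\binom{an-1}{k-1}$ forces us to use Lemma~\ref{lem:lambdaThm} with $b=-1$ rather than $b=0$ is also not matching since the shift is constant; so the genuinely right move is to first rewrite $\binom{an-1}{k-1} = \binom{an}{k-1} - \binom{an-1}{k-2}$ (Pascal) or, better, to recognize $\binom{an-1}{k-1}\frac{(k-1)!}{n!} = \binom{an-1}{k-1}\frac{(k-1)!}{n!}$ as exactly the coefficient shape appearing when one composes with $t\mapsto t(1+\cdots)^{a n - 1}$ at order... . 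I would therefore structure the final proof as: (i) set up the EGFs $X(t),Y(t)$; (ii) show the hypothesis says $Y = $ (a specific operational transform of $X$, namely the one from \cite{BGW12} with exponent $a$ and an extra $t^{-1}$-type shift accounting for the $-1$); (iii) invert that transform explicitly, the inverse being multiplication/composition whose Bell-coefficient is $(1-an)^{k-1}$; (iv) read off coefficients. The one genuinely delicate point — and the main obstacle — is step (iii): verifying that the operational inverse of the ``$\binom{an-1}{k-1}$-transform'' has Bell kernel exactly $(1-an)^{k-1}$ rather than some other polynomial in $n$ of degree $k-1$; this I would check by comparing with the known special case, or by a direct induction on $n$ using the recursion $B_{n,k} = \sum_i \binom{n-1}{i-1} (i!x_i/i!)\,B_{n-i,k-1}$ and Corollary~\ref{cor:inverse} as a sanity check at $a$ replaced by $a$ with the shift removed.
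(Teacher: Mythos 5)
Your proposal assembles the right ingredients (Lemma~\ref{lem:lambdaThm} with $b=0$, Fa{\`a} di Bruno, generating functions) but never closes the argument: every route you sketch terminates at the same unproven ``crux,'' namely that the inverse of the $\binom{an-1}{k-1}$-transform has Bell kernel exactly $(1-an)^{k-1}$. The one computation you do carry through --- applying the lemma with parameters $(a,0)$ and setting $\lambda=1-an$, which leaves $\binom{1}{k-1}$ alive for $k=1,2$ --- correctly demonstrates that the naive specialization fails, but you stop there and flag the resolution as ``the main obstacle'' rather than supplying it. Your fallback (define $\tilde x_n$ by the claimed formula, feed it back through the $y$-transform, and invoke triangularity/uniqueness) is sound in principle but reduces to verifying the very same identity, so it does not fill the gap.

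The missing idea, which is the heart of the paper's proof, is to introduce the auxiliary sequence
\[
 z_n = \sum_{k=1}^n \binom{an}{k-1}\frac{(k-1)!}{n!} B_{n,k}(1!x_1,2!x_2,\dots),
\]
i.e.\ the same transform \emph{without} the $-1$ shift, to which Lemma~\ref{lem:lambdaThm} applies verbatim. Running the lemma with $b=0$ and $\lambda$ replaced by $\lambda-1$ relates $(z_n)$ to $(x_n)$ through the kernel $\binom{\lambda+an-1}{k-1}$ --- this is where the constant $-1$ gets absorbed --- and the special case $\lambda=0$ identifies $y(t)=\log\bigl(1+z(t)\bigr)$. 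That logarithm is precisely the device that converts the falling-factorial kernel $\binom{\lambda-1}{k-1}(k-1)!$ attached to $(z_n)$ (coming from $(1+t)^{\lambda}$) into the pure power kernel $\lambda^{k-1}$ attached to $(y_n)$ (coming from $e^{\lambda t}$), via $e^{\lambda y}=(1+z)^{\lambda}$ and Fa{\`a} di Bruno. Only after this conversion does the specialization $\lambda=1-an$ do what you wanted: the right-hand side becomes $\sum_{k}\binom{0}{k-1}(k-1)!B_{n,k}(1!x_1,\dots)=n!\,x_n$, killing every term except $k=1$, while the left-hand side reads off as $\sum_k (1-an)^{k-1}B_{n,k}(1!y_1,\dots)$. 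Without the intermediate sequence and the $\log$/$\exp$ switch, no choice of parameters in Lemma~\ref{lem:lambdaThm} alone produces the power kernel $(1-an)^{k-1}$, which is exactly the difficulty your attempts kept running into.
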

\begin{proof}
Let $(z_n)$ be the sequence defined by
\begin{equation*}
 z_n = \sum_{k=1}^n \binom{an}{k-1}\frac{(k-1)!}{n!} B_{n,k}(1!x_1,2!x_2,\dots).
\end{equation*}
By Lemma~\ref{lem:lambdaThm} with $b=0$, we have
\begin{align} \label{eq:z(lambda-1)}
 \sum_{k=1}^n\tbinom{\lambda-1}{k-1}(k-1)!B_{n,k}(1!z_1,2!z_2,\dots)
 &= \sum_{k=1}^n\tbinom{\lambda+an-1}{k-1}(k-1)!B_{n,k}(1!x_1,2!x_2,\dots) \\
 \intertext{for any $\lambda\in\mathbb{C}$, and}
 \sum_{k=1}^n\tbinom{-1}{k-1}(k-1)!B_{n,k}(1!z_1,2!z_2,\dots) \label{eq:z(-1)}
 &= \sum_{k=1}^n\tbinom{an-1}{k-1}(k-1)!B_{n,k}(1!x_1,2!x_2,\dots).
\end{align}
If we denote $y(t)=\sum_{n=1}^\infty y_n t^n$ and  $z(t)=\sum_{n=1}^\infty z_n t^n$, then \eqref{eq:z(-1)} means (via Fa{\`a} di Bruno's formula) that the generating functions $y(t)$ and $z(t)$ are related by the identity
\[ y(t) = \log(1+z(t)). \]
Thus, if we let $f_1(t)=e^{\lambda t}$ and $f_2(t) = (1+t)^\lambda$, then
\[ f_1(y(t)) = f_1(\log(1+z(t))) = f_2(z(t)), \]
and Fa{\`a} di Bruno's formula gives
\begin{align*}
 [t^n] (f_1\circ y) &= \sum_{k=1}^n \lambda^{k} B_{n,k}(1!y_1,2!y_2,\dots), \text{ and }\\
 [t^n] (f_2\circ z) &= \sum_{k=1}^n (\lambda)_k\, B_{n,k}(1!z_1,2!z_2,\dots)
   = \sum_{k=1}^n \lambda\tbinom{\lambda-1}{k-1}(k-1)!B_{n,k}(1!z_1,2!z_2,\dots).
\end{align*}
This implies
\begin{equation*}
 \sum_{k=1}^n \lambda^{k} B_{n,k}(1!y_1,2!y_2,\dots)
 = \sum_{k=1}^n \lambda\tbinom{\lambda-1}{k-1}(k-1)!B_{n,k}(1!z_1,2!z_2,\dots).
\end{equation*}
Combining this identity with \eqref{eq:z(lambda-1)}, we obtain
\begin{equation*}
  \sum_{k=1}^n \lambda^{k-1} B_{n,k}(1!y_1,2!y_2,\dots)
  = \sum_{k=1}^n\tbinom{\lambda+an-1}{k-1}(k-1)!B_{n,k}(1!x_1,2!x_2,\dots).
\end{equation*}

Finally, if $\lambda=1-an$, then most terms on the right-hand side of the equation vanish and the sum reduces to $B_{n,1}(1!x_1,2!x_2,\dots)=n!x_n$. This gives the claimed identity.
\end{proof}

\section{Enumeration of factor-free Dyck words}
\label{sec:enumeration}

Let $\alpha$ and $\beta$ be positive integers with $\gcd(\alpha,\beta)=1$. As noted in the introduction, Bizley's formula \eqref{eq:rationalPaths} for the number of $\frac{\beta}{\alpha}$-Dyck paths of length $(\alpha+\beta)n$ can be conveniently written using partial Bell polynomials as
\begin{equation*}
  \phi_n=\sum_{k=1}^{n}\frac1{n!}B_{n,k}(1!f_1,2!f_2,\dots),
\end{equation*}
where $f_j=\frac1{(\alpha+\beta)j}\binom{(\alpha+\beta)j}{\alpha j}$ for $j\in\mathbb{N}$. Inverting the above identity, we get
\begin{equation}\label{eq:fnyn}
  f_n = \sum_{k=1}^{n} \binom{-1}{k-1}\frac{(k-1)!}{n!}B_{n,k}(1!\phi_1,2!\phi_2,\dots).
\end{equation}

On the other hand, as claimed in \eqref{eq:yw}, we have the alternative representation
\begin{equation}\label{eq:yw2}
  \phi_n = \sum_{k=1}^{n} \binom{(\alpha + \beta)n}{k-1} \frac{(k-1)!}{n!} B_{n, k}(1! \theta_1,2! \theta_2,\dots),
\end{equation}
where $\theta_j$ is the number of factor-free Dyck words with slope $\beta/\alpha$ and length $(\alpha+\beta)j$. Using Lemma~\ref{lem:lambdaThm} with $a=\alpha+\beta$, $b=0$, and $\lambda=-1$, we get
\begin{equation*}
  \sum_{k=1}^{n} \tbinom{-1}{k-1} (k-1)! B_{n,k}(1!\phi_1,2!\phi_2,\dots)
  = \sum_{k=1}^{n} \tbinom{(\alpha + \beta)n-1}{k-1} (k-1)! B_{n, k}(1! \theta_1,2! \theta_2,\dots),
\end{equation*}
which together with \eqref{eq:fnyn} gives the identity
\begin{equation}\label{eq:fnwn}
  f_n = \sum_{k=1}^{n} \binom{(\alpha + \beta)n-1}{k-1}\frac{(k-1)!}{n!} B_{n, k}(1! \theta_1,2! \theta_2,\dots)
\end{equation}
for every $n\in\mathbb{N}$.

\medskip
As a consequence of the inverse relation given in Proposition~\ref{prop:inverse}, we obtain:
\begin{theorem} \label{thm:factor-freeCount}
For every $n\in\mathbb{N}$, the number of factor-free Dyck words with slope $\beta/\alpha$ and length $(\alpha+\beta)n$ is given by
\begin{equation*}
  \theta_n = \sum_{k=1}^{n} \frac{\left(1-(\alpha+\beta)n\right)^{k-1}}{n!} B_{n, k}(1! f_1,2! f_2,\dots),
\end{equation*}
where $f_j=\frac1{(\alpha+\beta)j}\binom{(\alpha+\beta)j}{\alpha j}$ for every $j$.
\end{theorem}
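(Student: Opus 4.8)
The plan is to observe that Theorem~\ref{thm:factor-freeCount} follows directly from Proposition~\ref{prop:inverse} once the identity \eqref{eq:fnwn} is in hand. First I would specialize the proposition by taking $a=\alpha+\beta$ (a positive integer, in particular an element of $\mathbb{Z}$), $x_n=\theta_n$, and $y_n=f_n$. With these choices, the hypothesis of Proposition~\ref{prop:inverse}, namely
\[
  y_n=\sum_{k=1}^n \binom{an-1}{k-1}\frac{(k-1)!}{n!}B_{n,k}(1!x_1,2!x_2,\dots),
\]
is precisely the identity \eqref{eq:fnwn} established above for $f_n$ in terms of the $\theta_j$.

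Then I would simply read off the conclusion of Proposition~\ref{prop:inverse}, which gives
\[
  \theta_n=x_n=\sum_{k=1}^n \frac{(1-an)^{k-1}}{n!}B_{n,k}(1!y_1,2!y_2,\dots)
  =\sum_{k=1}^n \frac{\bigl(1-(\alpha+\beta)n\bigr)^{k-1}}{n!}B_{n,k}(1!f_1,2!f_2,\dots),
\]
which is exactly the claimed formula, the values $f_j=\frac1{(\alpha+\beta)j}\binom{(\alpha+\beta)j}{\alpha j}$ being the ones appearing in Bizley's formula.

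I would emphasize that all the genuine content has already been assembled before this point: Bizley's formula \eqref{eq:rationalPaths} rewritten via partial Bell polynomials, its inversion \eqref{eq:fnyn} (an instance of Corollary~\ref{cor:inverse}), Duchon's identity \eqref{eq:Thm9} combined with \cite[Theorem~3.5]{BGMW} to produce the alternative representation \eqref{eq:yw2}, and finally Lemma~\ref{lem:lambdaThm} with $a=\alpha+\beta$, $b=0$, $\lambda=-1$ used to bridge \eqref{eq:fnyn} and \eqref{eq:yw2} into \eqref{eq:fnwn}. Consequently this proof is essentially a one-line application of Proposition~\ref{prop:inverse}, and I do not anticipate a real obstacle; the only point worth a second glance is that Proposition~\ref{prop:inverse} imposes no positivity or invertibility hypothesis on the sequence $(x_n)$ or on the integer $a$, so the specialization $x_n=\theta_n$, $a=\alpha+\beta$ is unconditionally legitimate.
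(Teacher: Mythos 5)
Your proposal is correct and is exactly the paper's own argument: the paper establishes \eqref{eq:fnwn} and then obtains the theorem by applying Proposition~\ref{prop:inverse} with $a=\alpha+\beta$, $x_n=\theta_n$, $y_n=f_n$. Nothing is missing.
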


This formula is easy to implement in any of the mainstream computer algebra systems. For example, in Sage, Maple, and Mathematica, partial Bell polynomials are implemented as {\tt bell\_polynomial}, {\tt IncompleteBellB}, and {\tt BellY}, respectively. The following table shows a few terms of the sequence $(\theta_n)$ for various slopes $\beta/\alpha$.

\begin{center}
\begin{tabular}{|c|c|l|} \hline
 $\beta/\alpha$ &OEIS & Sequence $(\theta_n)$ of factor-free words with slope $\beta/\alpha$ and length $(\alpha+\beta)n$\rule[-1.2ex]{0ex}{4ex} \\ \hline
 3/2 &\oeis{A005807}& 2, 3, 7, 19, 56, 174, 561, 1859, 6292, 21658, 75582, 266798, \dots \rule[-1ex]{0ex}{4.2ex} \\[5pt]
 5/2 &\oeis{A274052}& 3, 13, 94, 810, 7667, 76998, 805560, 8684533, 95800850, 1076159466, \dots \\[5pt]
 7/2 &\oeis{A274244}& 4, 34, 494, 8615, 165550, 3380923, 71999763, 1580990725, 35537491360, \dots \\[5pt]
 9/2 &\oeis{A274256}& 5, 70, 1696, 49493, 1593861, 54591225, 1950653202, 71889214644, \dots \\[5pt]
 4/3 &\oeis{A274257}& 5, 52, 880, 17856, 399296, 9491008, 235274240, 6014201600, \dots \\[5pt]
 5/3 &\oeis{A274258}& 7, 133, 4140, 154938, 6398717, 281086555, 12882897819, 609038885805, \dots \\[5pt]
 7/3 &\oeis{A274259}& 12, 570, 44689, 4223479, 441010458, 49014411306, 5685822210429, \dots \\[3pt] \hline
\end{tabular}
\end{center}

\medskip
The other class of lattice paths considered by Bizley \cite{Bizley} is the set of $\frac{\beta}{\alpha}$-Dyck paths that stay {\em strongly} below the line $y=\frac{\beta}{\alpha}x$. He proved that the number $\psi_n$ of such paths of length $(\alpha+\beta)n$ is given by
\begin{equation*}
  \psi_n=\sum_{k=1}^{n}\frac{(-1)^{k-1}}{n!}B_{n,k}(1!f_1,2!f_2,\dots).
\end{equation*}

Written in terms of $(\psi_n)$, the sequences $(f_n)$, $(\phi_n)$, and $(\theta_n)$ show an interesting pattern.
\begin{proposition} \label{prop:psi}
The following identities hold for every $n\in\mathbb{N}$:
\begin{align} \label{eq:f_of_psi}
 f_n &= \sum_{k=1}^{n}\frac{(k-1)!}{n!}B_{n,k}(1!\psi_1,2!\psi_2,\dots), \\ \label{eq:invert_psi}
 \phi_n &= \sum_{k=1}^{n}\frac{k!}{n!}B_{n,k}(1!\psi_1,2!\psi_2,\dots), \\ \label{eq:w_of_psi}
 \theta_n &= \sum_{k=1}^{n} \binom{-(\alpha+\beta)n+k}{k-1} \frac{(k-1)!}{n!}B_{n,k}(1!\psi_1,2!\psi_2,\dots).
\end{align}
\end{proposition}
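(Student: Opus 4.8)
The plan is to exploit the fact that all four sequences $(f_n)$, $(\phi_n)$, $(\theta_n)$, $(\psi_n)$ are linked to $(f_n)$ through identities of the shape appearing in Lemma~\ref{lem:lambdaThm}, so that everything reduces to composition of generating functions. First I would encode the sequences by their exponential-type generating functions $F(t)=\sum f_n t^n$, $\Phi(t)=\sum \phi_n t^n$, $\Theta(t)=\sum \theta_n t^n$, $\Psi(t)=\sum \psi_n t^n$. Bizley's two formulas, read through Fa\`a di Bruno (Lemma~\ref{lem:FadiBruno}), say precisely that $1+\Phi(t)=\exp\big(F(t)\big)$ and $1+\Psi(t)=$ the series obtained from $F$ via $\sum (-1)^{k-1}B_{n,k}/n!$, i.e.\ $\Psi(t)=1-\exp(-F(t))$, equivalently $F(t)=-\log\big(1-\Psi(t)\big)$. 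These two relations together immediately give $1+\Phi(t)=\dfrac{1}{1-\Psi(t)}$, which is exactly \eqref{eq:invert_psi} once one recognizes $\sum_{k}k!\,B_{n,k}(1!\psi_1,\dots)/n!$ as $[t^n]\sum_{k\ge1}\Psi(t)^k=[t^n]\dfrac{\Psi}{1-\Psi}$. Likewise $F=-\log(1-\Psi)$ is \eqref{eq:f_of_psi}, since $\sum_k (k-1)!\,B_{n,k}(1!\psi_1,\dots)/n!=[t^n]\sum_{k\ge1}\Psi^k/k=[t^n](-\log(1-\Psi))$. So the first two identities are essentially bookkeeping, provided I am careful about the shift between the "$x_n$" convention $B_{n,k}(1!x_1,2!x_2,\dots)$ and ordinary power series, which is the standard translation used already in Section~\ref{sec:enumeration}.

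For \eqref{eq:w_of_psi} I would combine the relation $F=-\log(1-\Psi)$ with the relation between $F$ and $\Theta$ coming from \eqref{eq:fnwn}. Equation \eqref{eq:fnwn} is exactly the hypothesis of Proposition~\ref{prop:inverse} with $a=\alpha+\beta$, roles $x_n=\theta_n$, $y_n=f_n$; but more directly, I want to run Lemma~\ref{lem:lambdaThm} in the other direction. Starting from \eqref{eq:yw2}, Lemma~\ref{lem:lambdaThm} with $a=\alpha+\beta$, $b=0$ gives, for every $\lambda$,
\[
 \sum_{k=1}^n \tbinom{\lambda}{k-1}(k-1)!\,B_{n,k}(1!\phi_1,2!\phi_2,\dots)
 = \sum_{k=1}^n \tbinom{\lambda+(\alpha+\beta)n}{k-1}(k-1)!\,B_{n,k}(1!\theta_1,2!\theta_2,\dots).
\]
Now I substitute $\phi$ in terms of $\psi$: since $1+\Phi=(1-\Psi)^{-1}$, Fa\`a di Bruno lets me rewrite the left side, for suitable $\lambda$, as a Bell-polynomial expression in the $\psi_j$. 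Concretely I would pick $f(u)=(1-u)^{-(\lambda')}$ type generating functions, or more cleanly use that $\log(1+\Phi)=-\log(1-\Psi)=F$, so $(1+\Phi)^{\mu}=\exp(\mu F)=(1-\Psi)^{-\mu}$; matching coefficients converts a $B_{n,k}(1!\phi_1,\dots)$ sum with binomial weights $\binom{\mu}{k}$ into one in the $\psi_j$ with weights arising from $(1-\Psi)^{-\mu}$. The endgame is the same specialization trick used in the proof of Proposition~\ref{prop:inverse}: choose the free parameter so that the $\theta$-side collapses to the single term $k=1$ on the side I do \emph{not} want, or rather so that on the $\theta$-side the binomial $\binom{\lambda+(\alpha+\beta)n}{k-1}$ becomes the desired $\binom{-(\alpha+\beta)n+k}{k-1}$ — this forces $\lambda=-2(\alpha+\beta)n+\text{(shift)}$, and then the $\psi$-side must be shown to equal $\sum_k \binom{-(\alpha+\beta)n+k}{k-1}\frac{(k-1)!}{n!}B_{n,k}(1!\psi_1,\dots)$.

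The main obstacle I anticipate is precisely that last matching of parameters for \eqref{eq:w_of_psi}: one has to verify that after substituting $(1+\Phi)=(1-\Psi)^{-1}$ the Fa\`a di Bruno coefficient on the $\psi$-side is literally $\binom{-(\alpha+\beta)n+k}{k-1}(k-1)!$ and not merely something equal to it up to an identity in $\lambda$ that only holds after the specialization. It may be cleaner to avoid Lemma~\ref{lem:lambdaThm} entirely and instead derive \eqref{eq:w_of_psi} straight from Theorem~\ref{thm:factor-freeCount} (which already expresses $\theta_n$ through the $f_j$ with weights $(1-(\alpha+\beta)n)^{k-1}$) by substituting $F=-\log(1-\Psi)$ into $\Theta$ via the composition $\Theta=g\!\circ\!F$ where $g$ encodes the weights $(1-(\alpha+\beta)n)^{k-1}$ — note $n$ appears in the weight, so $g$ genuinely depends on $n$, and I would treat $n$ as fixed and read $[t^n]$ at the end. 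Either route, the computation is short once the generating-function dictionary is set up; the only real care needed is the consistent handling of the $k!$-versus-$(k-1)!$ and the $n!$ normalizations, and confirming the $\binom{-(\alpha+\beta)n+k}{k-1}$ shift by a direct expansion of $(1-\Psi)^{(\alpha+\beta)n}$ or $(1-\Psi)^{-(\alpha+\beta)n}$ and comparing with the partial Bell polynomial form. I would present \eqref{eq:f_of_psi} and \eqref{eq:invert_psi} first as the easy consequences of Bizley's two formulas, then deduce \eqref{eq:w_of_psi} from \eqref{eq:f_of_psi} together with Theorem~\ref{thm:factor-freeCount}.
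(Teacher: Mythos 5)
Your treatment of \eqref{eq:f_of_psi} and \eqref{eq:invert_psi} via $1+\Phi=e^{F}$, $\Psi=1-e^{-F}$, hence $F=-\log(1-\Psi)$ and $1+\Phi=(1-\Psi)^{-1}$, is exactly the paper's argument, and it is correct. For \eqref{eq:w_of_psi} you take a genuinely different route. The paper rewrites \eqref{eq:invert_psi} as $\phi_n=\sum_k\binom{k}{k-1}\frac{(k-1)!}{n!}B_{n,k}(1!\psi_1,\dots)$, applies Lemma~\ref{lem:lambdaThm} with $a=0$, $b=1$, $\lambda=-(\alpha+\beta)n$ to the pair $(\psi,\phi)$ --- which produces the shifted binomial $\binom{-(\alpha+\beta)n+k}{k-1}$ automatically, thereby dissolving the parameter-matching worry you flag in your Route~1 --- and then identifies the resulting $\phi$-sum as $\theta_n$ by Corollary~\ref{cor:inverse} applied to \eqref{eq:yw2}. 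Your preferred alternative (Route~2), composing Theorem~\ref{thm:factor-freeCount} with $F=-\log(1-\Psi)$, does work and is arguably more direct: setting $\lambda=1-(\alpha+\beta)n$ one has $\theta_n=\tfrac1\lambda\,[t^n]e^{\lambda F}=\tfrac1\lambda\,[t^n](1-\Psi)^{-\lambda}$, and expanding $(1-\Psi)^{-\lambda}$ by Fa{\`a} di Bruno gives $\tfrac1{n!}\sum_k(\lambda+1)(\lambda+2)\cdots(\lambda+k-1)\,B_{n,k}(1!\psi_1,\dots)=\sum_k\binom{\lambda+k-1}{k-1}\tfrac{(k-1)!}{n!}B_{n,k}(1!\psi_1,\dots)$, which is precisely \eqref{eq:w_of_psi} since $\lambda+k-1=-(\alpha+\beta)n+k$. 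The trade-off is that your route presupposes Theorem~\ref{thm:factor-freeCount} (legitimate, as it is proved earlier), while the paper's uses only \eqref{eq:yw2} and the inversion machinery of Section~\ref{sec:inverse_relation}; your Route~1 (Lemma~\ref{lem:lambdaThm} with $a=\alpha+\beta$, $b=0$ on the $(\theta,\phi)$ pair followed by a $\phi\to\psi$ substitution) is the same set of ingredients permuted and would also close, but, as you anticipate, it forces exactly the bookkeeping that the paper's choice $(a,b)=(0,1)$ avoids. To turn the proposal into a proof you should commit to one route and carry out the coefficient computation explicitly; as written, Route~1 is left with an acknowledged gap, and Route~2 is only asserted to be ``short once the dictionary is set up.''
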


\medskip
Note that the above expressions are instances of
\begin{equation*}
  \sum_{k=1}^{n} \binom{r+k-1}{k-1}\frac{(k-1)!}{n!}B_{n,k}(1!\psi_1,2!\psi_2,\dots)
\end{equation*}
for $r=0, 1, 1-(\alpha+\beta)n$, respectively. It is worth mentioning that the sequence $(\phi_n)$ is the {\sc invert} transform of the sequence $(\psi_n)$.

\begin{proof}[Proof of Proposition~\ref{prop:psi}]
Let $f(t)$, $\phi(t)$, and $\psi(t)$ be the generating functions of $(f_n)$, $(\phi_n)$, and $(\psi_n)$, respectively. Since $\psi(t) = 1 - e^{-f(t)}$ and $1+\phi(t) = e^{f(t)}$, we get
\[ f(t)=-\log(1-\psi(t)) \,\text{ and }\, 1+\phi(t) = \frac{1}{1-\psi(t)}, \]
which give identities \eqref{eq:f_of_psi} and \eqref{eq:w_of_psi} via Fa{\`a} di Bruno's formula.

Using \eqref{eq:invert_psi} we now write $\phi_n$ as
\begin{equation*}
 \phi_n = \sum_{k=1}^{n}\binom{k}{k-1}\frac{(k-1)!}{n!}B_{n,k}(1!\psi_1,2!\psi_2,\dots)
\end{equation*}
and use Lemma~\ref{lem:lambdaThm} with $a=0$, $b=1$, and $\lambda=-(\alpha+\beta)n$ to conclude that
\begin{equation*}
  \sum_{k=1}^n\tbinom{-(\alpha+\beta)n}{k-1}\tfrac{(k-1)!}{n!} B_{n,k}(1!\phi_1,2!\phi_2,\dots)
  = \sum_{k=1}^n\tbinom{-(\alpha+\beta)n+k}{k-1}\tfrac{(k-1)!}{n!} B_{n,k}(1!\psi_1,2!\psi_2,\dots).
\end{equation*}
Finally, because of the representation \eqref{eq:yw2}, Corollary~\ref{cor:inverse} implies that the left-hand side of the above identity is precisely $\theta_n$. Thus \eqref{eq:w_of_psi} holds.
\end{proof}


\end{document}